\theoremstyle{definition}
\newtheorem{definition}{Definition}
\theoremstyle{plain}
\newtheorem{theorem}{Theorem}
\newtheorem{lem}{Lemma}
\newtheorem{prop}{Proposition}
\theoremstyle{remark}
\newtheorem{remark}{Remark}
\DeclareMathOperator{\RR}{\mathbb R}
\DeclareMathOperator{\NN}{\mathbb N}
\begin{document}

\title{Asymptotic Smoothness, Convex Envelopes and Polynomial Norms}

\author[R. Gonzalo]{Raquel Gonzalo}
\address{R. Gonzalo: Departamento de Matem\'atica Aplicada, Facultad de
Inform\'atica, Universidad Polit\'ec\-ni\-ca, Campus de
Montegancedo, Boadilla del Monte, 28660-Madrid (Spain).}
\email{rngonzalo@fi.upm.es}

\author[J. A. Jaramillo]{Jes\'us Angel Jaramillo}
\address{ J. A. Jaramillo: Instituto de Matem\'atica Interdiscliplinar (IMI), Departamento
de An\'alisis Matem\'atico, Universidad Complutense de Madrid, 28040-Madrid (Spain).}
\email{jaramil@mat.ucm.es}

\author[D. Y\'a\~{n}ez]{Diego Y\'a\~{n}ez}
\address{D. Y\'a\~{n}ez: Departamento de Matem\'aticas,
Escuela de Ingenierías Industriales,
Universidad de Extremadura, 06006-Badajoz (Spain).}
\email{dyanez@unex.es}

\thanks{Partially supported by MINECO (Spain) Project MTM2012-34341}

\date{}

\begin{abstract}

We introduce a suitable notion of {\it asymptotic smoothness} on infinite dimensional Banach spaces, and  we prove that, under some structural restrictions on the space, the convex envelope of an asymptotically smooth function is asymptotically smooth. Furthermore, we study convexity and smoothness properties of polynomial norms, and we obtain that a polynomial norm of degree $N$ has modulus of convexity of power type $N$.
\end{abstract}

\maketitle

\section{Introduction}

Let  $X$ be a Banach space and consider a function $f:X\to \mathbb{R}$, which we assume to be continuous and bounded below. Recall that the {\it convex envelope} of $f$ is defined as the greatest convex function majorized by $f$, that is

$$ \mbox{\sl conv} f (x)=\sup\{ g(x) \, / \,  g:X\to \mathbb{R}, \,  g \mbox{ convex }, \, g\leq f \}.$$

It is no difficult to see that

$$  \mbox{\sl conv} f (x)=\inf{\Big{\{}}\sum_{i=1}^{k} \lambda_{i} f(x_{i})\, / \, \lambda_{i}\geq 0, \, \sum_{i=1}^{k} \lambda_{i}=1, \, \sum_{i=1}^{k}\lambda_{i} x_{i}=x, \, k \in \mathbb N {\Big{\}}}.$$

When $X$ is  a finite dimensional space of dimension $n$, then by a classical result of Carath\'eodory  we can restrict the above sums to the case $k=n+1$.

\

In this note we are interested in how {\sl conv}$f$ inherits smoothness properties from $f$.  Not much seems to be known about this in the case of general Banach spaces. In the finite-dimensional case, this usually requires some restrictions on the growth of the function $f$. For example, it was proved by Griewank and Rabier  \cite{GR} that, if $f:\mathbb R^n \to \mathbb R$ is of class $C^1$ (respectively, $C^{1, \alpha}$), then so is {\sl conv}$f$, provided
$$
\lim_{\|x\|\to\infty}\dfrac{f(x)}{\|x\|}=\infty.
$$
On the other hand, Kirchheim and Kristensen \cite{KK} have obtained that  {\sl conv}$f$ is differentiable when $f$ is, provided
$$
\lim_{\|x\|\to\infty}f(x)=\infty.
$$
But is is well known that, in general, {\sl conv}$f$  does not share the differentiability properties of  $f$. For example (see \cite{BH}) the function $f(x,y)=\sqrt{x^2+e^{-y^2}}$ is everywhere differentiable on $\mathbb R^2$ and nevertheless  {\sl conv}$f$ is not. Concerning high order smoothness, it is easy to construct examples of polynomial functions on the real line whose convex envelope is not even  $C^2$ smooth.

\

Our aim here is to show that, in some sense, this loosing of differentiability is essentially a finite-dimensional phenomenon. More precisely, we will introduce a suitable notion of {\it asymptotic smoothness} on infinite-dimensional Banach spaces, and then we will prove that, under some structural restrictions on the space, the convex envelope of an asymptotically smooth function will be again asymptotically smooth. Furthermore, we will also study convexity and smoothness properties of polynomial norms in Banach spaces.
\

The contents of the paper are as follows. In section 2 we review the basic notions of uniform convexity and smoothness of a Banach spaces, as well as their asymptotic counterparts. In section 3, we introduce the related notion of asymptotically $p$-smooth function, for $1<p< \infty$. We also prove that asymptotic $p$-smoothness can be characterized by a suitable modulus, in spaces where polynomials of degree $<p$ have nice weak continuity properties. In section 4 we obtain that, in this class of spaces, the convex envelope of an asymptotically $p$-smooth function is asymptotically $p$-smooth. Finally, section 5 is devoted to study the moduli of convexity and smoothness of polynomial norms in Banach spaces. As a main result here, we prove that a polynomial norm of degree $N$ has modulus of convexity of power type $N$.

\section{Preliminaires}

We start this section by recalling the classical notions of uniform convexity and smoothness for a Banach space $(X, \Vert \cdot \Vert)$. The {\it modulus of uniform convexity} of $X$ is defined for $\varepsilon \in (0,2]$ as
$$
\delta_X(\varepsilon) = \inf \left\{ 1 - \frac{\Vert x +y \Vert}{2}:
x,y\in X; \Vert x \Vert = \Vert y \Vert=1, \Vert x- y \Vert
=\varepsilon \right\}.
$$
The space $X$ is said to be {\it uniformly convex} if $\delta_X(\varepsilon)>0$ for all $\varepsilon \in (0,2]$. Furthermore,
$X$ is said to have modulus of convexity of power type $p$ if there is a constant $C>0$ such that $\delta_X(\varepsilon) \geq C
\varepsilon^p$ for for all $\varepsilon \in (0,2]$. On the other hand, the {\it modulus of uniform
smoothness} of $X$ is defined for $\tau \in (0,2]$ as
$$
\rho_X(\tau) = \sup \left\{ \frac{\Vert x +\tau y \Vert + \Vert x -
\tau y \Vert}{2}-1: x,y\in X; \Vert x \Vert = \Vert y \Vert=1,
\Vert x- y \Vert =\tau \right\}.
$$
The space $X$ is said to be {\it uniformly smooth} if
$\lim_{\tau \to 0} \frac{\rho_X(\tau)}{t}=0$. Furthermore, $X$ is
said to have modulus of smoothness of power type $p$ if there is a
constant $C>0$ such that $\rho_X(\tau) \leq C \tau^p$ for for all
$\tau \in (0,2]$.

\

We now recall the concepts of asymptotic uniform convexity and smoothness, and the corresponding moduli. These notions  were first introduced by
Milman \cite{Milman} under different notations and names. We refer to \cite{JLPS} for a  survey about the most relevant results concerning these moduli. Along the paper we will use the same notation and terminology as in \cite{JLPS}.

\

For a  Banach space $(X, \Vert \cdot \Vert)$, the {\it
pointwise modulus of asymptotic convexity} is defined for each $\Vert x
\Vert =1$ and $t>0$ by:
$$
\overline{\delta}(t;x)= \sup_{dim(X/H)<\infty} \qquad \inf_{h\in H,
\,\, \Vert h \Vert \geq t} \qquad \Vert x+h \Vert -1,
$$
\noindent and the {\it modulus of asymptotic uniform convexity} is
defined for  $t>0$ by:
$$
\overline{\delta}_X(t)= \inf_{\Vert x \Vert =1} \delta(t;x).
$$
\noindent The {\it pointwise modulus of asymptotic smoothness } is
defined for $\Vert x \Vert =1$ and $t>0$ by:
$$
\overline{\rho}(t;x)=\inf_{dim(X/H)<\infty} \qquad \sup_{h\in H,
\,\, \Vert h \Vert \leq t}  \qquad\Vert x+h \Vert -1,
$$
\noindent and the {\it modulus of asymptotic uniform smoothness }
is defined for $t>0$ by:
$$
\overline{\rho}_X(t)= \sup_{\Vert x \Vert =1} \rho(t;x).
$$

\noindent The space $X$ is said to be {\it asymptotically uniformly
convex} if $\overline{\delta}(t)>0$ for every $t>0$, and {\it
asymptotically uniformly smooth} if $\overline{\rho}(t)/t \to 0$ as
$t\to 0$.  Usually, one is interested in  power-type estimates of
moduli. More precisely,  for $1\leq p<\infty$ we say that $X$
has modulus of {\it asymptotic convexity of power type} $p$  if
there exists $C>0$ such that $\overline{\delta}(t) \geq Ct^ p$, for
all $0< t \leq 1$. In the same way, $X$ has modulus of {\it
asymptotic smoothness of power type} $p$
if there exists $C>0$ so that $\overline{\rho}(t)
\leq Ct^p$, for all $0 <t \leq 1$. For example, in the case of
space $\ell_p$ with $1\leq p<\infty$, we have that
$\overline{\rho}(t)=\overline{\delta}(t)=(1+t^p)^{1/p}-1$; and for
the space $c_0$ we have that
$\overline{\rho}(t)=\overline{\delta}(t)=0$ for $0<t\leq 1$. As a consequence,
$\ell_p$ has modulus of  asymptotic convexity and smoothness of
power type $p$ for every $1\leq p<\infty$. Note in particular that
$\ell_1$ is  asymptotically uniformly convex and $c_0$ is
asymptotically uniformly smooth.

\

Now if, instead of the norm, we consider a continuous convex function
$f: X \to \mathbb R$, it is possible to define the analogous notions for $f$ along the same lines.
The {\it pointwise modulus of asymptotic  convexity} of $f$ is defined for  $x\in X$ and  $t>0$ by:
$$
\overline{\delta}_{f}(t;x) = \sup_{dim(X/H)<\infty} \, \,
\inf_{h\in H, \,\, \Vert h \Vert \geq t}  \, \, f(x+h) -f(x).
$$
\noindent Similarly, the {\it pointwise modulus of asymptotic smoothness} of $f$ is defined for  $x\in X$
and  $t>0$ by:
$$
\overline{\rho}_{f}(t;x)=\inf_{dim(X/H)<\infty} \, \,
\sup_{h\in H, \, \, \Vert h \Vert \leq t} \, \, f(x+h) -f(x).
$$
\noindent Finally, the  {\it moduli of asymptotic uniform convexity and  smoothness}  of the function $f$
on a given subset $S$ of $X$ are defined for $t>0$ by:
$$
\displaystyle{\overline{\delta}_{f}(t;S)= \inf_{x\in S} \, \,
\sup_{dim(X/H)<\infty} \, \, \inf_{h\in H, \Vert y \Vert \geq t} f(x+h)-f(x).}
$$
$$
\displaystyle{\overline{\rho}_{f}(t;S)= \sup_{x \in S} \, \,
\inf_{dim(X/H)<\infty} \, \, \sup_{h\in H, \Vert h \Vert \leq t}
f(x+h)-f(x).}
$$

\noindent It is  clear that when  $f(x)= \Vert \cdot \Vert$ and $S=S_{X}$ is the unit sphere of the Banach space $(X,\Vert \cdot \Vert)$ we obtain the corresponding definitions of the moduli of asymptotic uniform convexity and smoothness of the space.

\

The following Lemma provides an alternative description of the pointwise modulus of asymptotic smoothness of a continuous convex function.

\

\begin{lem} \label{absolute}
Let $X$ be a Banach space and $f:X \to \RR$ a continuous convex function. Then, for every $x\in X$ and $t>0$ we have that
$$
\overline{\rho}_{f}(t;x)=  \inf_{dim(X/H)<\infty} \, \, \sup_{h\in H, \, \, \Vert h \Vert \leq t} \, \, \vert f(x+h) -f(x) \vert .
$$
\end{lem}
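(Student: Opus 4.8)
The plan is to prove the stronger, pointwise fact that for \emph{each} finite-codimensional subspace $H$ the two suprema already coincide, namely
$$
\sup_{h\in H,\,\|h\|\le t}\bigl(f(x+h)-f(x)\bigr)=\sup_{h\in H,\,\|h\|\le t}\bigl|f(x+h)-f(x)\bigr|,
$$
and then simply take the infimum over all such $H$ on both sides. One inequality between the two quantities in the statement is of course trivial: since $f(x+h)-f(x)\le|f(x+h)-f(x)|$, the supremum with absolute values dominates the one without, for every $H$, and hence so do the infima. Thus the whole content lies in the reverse inequality.

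For the reverse inequality I would fix a subspace $H$ with $\dim(X/H)<\infty$ and an element $h\in H$ with $\|h\|\le t$, and argue that $|f(x+h)-f(x)|$ is bounded above by $\sup_{h'\in H,\,\|h'\|\le t}\bigl(f(x+h')-f(x)\bigr)$. If $f(x+h)\ge f(x)$ this is immediate, since then $|f(x+h)-f(x)|=f(x+h)-f(x)$, which is one of the terms in that supremum. The interesting case is $f(x+h)<f(x)$, and here I would exploit two facts: first, that $H$ is a \emph{subspace}, so $-h\in H$ and $\|-h\|=\|h\|\le t$, so $f(x-h)-f(x)$ also appears in the supremum; and second, the convexity of $f$, which applied to the midpoint $x=\tfrac12\bigl((x+h)+(x-h)\bigr)$ gives $2f(x)\le f(x+h)+f(x-h)$. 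Rearranging yields
$$
|f(x+h)-f(x)|=f(x)-f(x+h)\le f(x-h)-f(x)\le \sup_{h'\in H,\,\|h'\|\le t}\bigl(f(x+h')-f(x)\bigr).
$$

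Taking the supremum over all admissible $h\in H$ then gives the pointwise equality of the two suprema for the fixed $H$, and taking the infimum over all finite-codimensional $H$ completes the proof. I do not expect any serious obstacle here; the argument is elementary once one notices that the symmetry of the subspace $H$ together with the midpoint convexity inequality lets one trade a negative increment $f(x+h)-f(x)$ for the nonnegative increment $f(x-h)-f(x)$ in the reflected direction. The only point worth a word of care is that the identity must be established \emph{before} passing to the infimum over $H$, rather than trying to compare the two outer infima directly.
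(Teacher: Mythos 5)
Your proposal is correct, and it takes a genuinely different route from the paper. The paper proves only the equality of the two infima, arguing by contradiction: it picks a subgradient $x^{*}\in\partial f(x)$ (which exists because $f$ is continuous and convex), passes to the smaller finite-codimensional subspace $H\cap\ker(x^{*})$ on which $f(x+h)-f(x)\geq x^{*}(h)=0$, so that the absolute value can be dropped there, and derives a contradiction. You instead prove the stronger pointwise statement that for \emph{every} fixed finite-codimensional $H$ the two suprema already coincide, using only that $H$ is symmetric and the midpoint convexity inequality $2f(x)\leq f(x+h)+f(x-h)$ to trade a negative increment $f(x+h)-f(x)$ for the nonnegative increment $f(x-h)-f(x)$. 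Your argument is more elementary (it does not invoke the nonemptiness of the subdifferential, and in fact does not use continuity of $f$ at all, only convexity and the symmetry of $H$), it avoids shrinking the subspace, and it yields slightly more information, since the identity holds before the infimum over $H$ is taken; all the inequalities remain valid in $[0,\infty]$ even when the suprema are infinite, so no boundedness issue arises. The paper's subdifferential argument, on the other hand, is the one that generalizes most directly if one wanted to replace the symmetric set $\{h\in H:\Vert h\Vert\leq t\}$ by a non-symmetric constraint set.
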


\begin{proof} Fix  $x\in X$ and $t>0$, and denote
$$
\alpha = \inf_{dim(X/H)<\infty} \, \, \sup_{h\in H, \, \, \Vert h \Vert \leq t} \, \, \vert f(x+h) -f(x) \vert .
$$
Since $f(x+h)-f(x) \leq \vert  f(x+h)-f(x) \vert$ we always have that $\overline{\rho}_{f}(t;x) \leq \alpha$. Assume that the inequality is strict. Then there exists a finite codimensional subspace $H$ of $X$ such that
$$
\sup_{h\in H, \, \, \Vert h \Vert \leq t} f(x+h) -f(x) < \alpha.
$$
Now since $f$ is a continuous convex function, it is well known (see e.g. \cite{libro}) that the subdifferential of $f$ at  $x$,
denoted by $\partial{f}(x)$, is a non-empty subset of $X^{*}$. Then there exists some  $x^{*} \in \partial{f}(x)$. This means that, for every $h\in X$,
$$
f(x+h) -f(x) \geq x^{*}(h).
$$
We now may consider  the finite codimensional subspace $H^{*}= H \cap ker(x^{*})$ and then:
$$
\alpha \leq \sup_{h\in H^{*}, \, \,  \Vert h \Vert \leq t} \vert f(x+h) -f(x)  \vert = \sup_{h\in H^{*}, \, \,  \Vert h \Vert \leq t}
 f(x+h) -f(x)  < \alpha,
 $$
which is a contradiction.
\end{proof}

\begin{remark} Let $X$ be a Banach space and $f:X \to \RR$ a continuous convex function. It follows from Lemma \ref{absolute} that for each $x\in X$ and and $0<t\leq 1$ we have that $\overline{\rho}_{f}(t;x) \geq 0$. A similar reasoning shows that also $\overline{\delta}(t;x) \geq 0$.
\end{remark}

\section{High order asymptotic smoothness and flatness. }

In \cite{GJT} several notions of high order smoothness, defined in terms of Taylor polynomial expansions, are considered. In particular,  if $X$ is a Banach space and  $1<p<\infty$, a function  $f:X \to \mathbb R$ is said to be {\it $U^p$-smooth} on a subset $S$ of $X$ if for each  $x\in S$
there exists a polynomial  $P_x$ on $X$ of degree  $< p$, with  $P(0)=0$, verifying that
$$|f(x+h)-f(x)-P_x(h)|= O (\|h\|^p),$$
\noindent uniformly on  $x\in S$. Here, as usual, by a polynomial of degree $k$ on $X$ we mean a function $P:X \to \mathbb R$ of the form $P= P_0+P_1+ \cdots +P_k$, where $P_0$ is constant and, for each $j= 1, \dots , k$, $P_j(x)$ is a $j$-homogeneous polynomial. This means that there exists a symmetric, $j$-linear, continuous mapping $A_j: X \times \cdots \times X \to \mathbb R$ such that $P_j(x) = A_j (x, \cdots , x)$ for all $x \in X$. For example, suppose that $k$ is the greatest integer strictly less than $p$. If $f$ is $C^k$-smooth and its $k$-th derivative is uniformly $(p- k)$-H\"{o}lder continuous on $X$, an application of Taylor formula with integral remainder gives us that $f$ is $U^p$-smooth on $X$.

\

We now introduce the notion of asymptotic $p$-smoothness which will be useful for our purposes as an ``asymptotic" version of $U^p$-smoothness:

\begin{definition}  Let $X$ be a Banach space and  $1<p<\infty$. We say that a function  $f:X \to \mathbb R$ is  {\it asymptotically $p$-smooth} at $x\in X$ if there exist $K_x>0$,  $\delta_x>0$ and a polynomial $P_x$ of degree  $< p$, with  $P(0)=0$, such that for each  $0<t<\delta_x$ there is a finite codimensional subspace $H_{(t,x)}$ of $X$,  verifying that if $h\in H_{(t,x)}$  and $\Vert h \Vert \leq t$:
$$
\vert f(x+h) -f(x) -P_x(h)\vert \leq K_{x}\, \|h\|^p.
$$
If  the constants $K_x$ and $\delta_x$ are uniform for all $x$ in a a subset $S$ of $X$, we say that $f$ is {\it uniformly asymptotically $p$-smooth on $S$}.
\end{definition}

As we will see, the concept of asymptotic smoothness is related to the following modulus:

\begin{definition} Let $X$ be a Banach space and consider  a function $f:X \to \mathbb R$. For each $t>0$, we define the {\it modulus of asymptotic smoothness} of $f$  at a point $x\in X$ as:
$$
\overline{\rho}_{f}(t;x):= \inf_{dim(X/H)<\infty} \sup_{h\in H, \Vert h \Vert \leq t}
\vert f(x+h) -f(x) \vert \in [0, \infty].
$$
In the same way, we define the {\it modulus of uniform asymptotic smoothness} of $f$  on a subset $S$ of $X$ as:
$$
\overline{\rho}_{f}(t;S):
= \sup_{x \in S} \overline{\rho}_{f}(t,x)= \inf_{x\in S} \inf_{dim(X/H)<\infty} \sup_{h\in H, \Vert h \Vert \leq t}
\vert f(x+y) -f(x) \vert  \in [0, \infty].
$$
We say that $f$ is {\it asymptotically flat} at $x$ (resp. {\it uniformly asymptotically flat} on $S$) if there exists $\tau>0$ such that $\overline{\rho}_{f}(t;x) =0$ (resp. $\overline{\rho}_{f}(t;S)=0$) for all $0<t<\tau$.
\end{definition}

Note that, from Lemma \ref{absolute}, this definition coincides with the one given in the previous Section for convex functions. On the other hand, it is cleat that if a function is asymptotically flat at a point (resp. uniformly on a subset $S$) then it is asymptotically $p$-smooth for every $p>1$ (resp. uniformly on $S$). Note in particular that the usual sup-norm $f(x)= \Vert x\Vert_{\infty}$ on the space $c_0$ is an example of non-differentiable function which is asymptotically $p$-smooth for every $p>1$.

\

We are going to show that, in some spaces, asymptotic $p$-smoothness can be characterized in terms of the above modulus. This requires some weak continuity properties of polynomials on the space. Recall that a function $f:X \to \mathbb R$ defined on a Banach space $X$ is said to be {\it wb-continuous} if, for every bounded subset $B$ of $X$ and every $\varepsilon>0$, there exist $\delta >0$ and functionals $x^*_1, \dots , x^*_m \in X^*$ such that $\vert f(x) - f(y) \vert < \varepsilon$ whenever $ x, y \in B$ with $\vert x^*_j (x-y) \vert <\delta$ for every $j=1, \dots , m$. It follows from \cite{FGL} that, if $X$ contains no copy of $\ell_1$, {\it wb}-continuity coincides with  sequential continuity for the weak topology of $X$. We refer to \cite {GJT} and \cite{DGJ} for information about the connections between smoothness properties of the space $X$ and {\it wb}-continuity of polynomials up to a given degree. For example, it follows from Corollary 2.2 in \cite{DGJ} that, if $X$ contains no copy of $\ell_1$ and $\overline{\rho}_X(t)= o(t^k)$, then  every polynomial on $X$ of degree $\leq k$ is {\it wb}-continuous. We need the following simple Lemma, where, for a subspace $Y$ of $X$, we denote by
$$\|f\|_{Y}= \sup\{|f(y)|: \Vert y \Vert = 1, y\in Y\}.$$

\begin{lem}\label{wb}
Let $X$ be a Banach space and let $f:X  \to \mathbb{R}$ be a {\it wb}-continuous function with $f(0)=0$. Then, for every
$\varepsilon >0$ there exists a finite codimensional subspace  $Y$ of $X$ such that $\|f\|_{Y}<\varepsilon$.
\end{lem}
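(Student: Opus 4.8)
The plan is to apply the definition of \emph{wb}-continuity directly to the closed unit ball of $X$, and to exploit the normalization $f(0)=0$ by always comparing a point with the origin. The point is that \emph{wb}-continuity supplies finitely many functionals $x_1^*, \dots, x_m^*$ controlling the oscillation of $f$ on a bounded set; the subspace $Y$ will simply be the common kernel of these functionals, on which every vector is ``indistinguishable from $0$'' as far as the $x_j^*$ are concerned.

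First I would fix $\varepsilon>0$ and apply the hypothesis of \emph{wb}-continuity with the bounded set $B=B_X$, the closed unit ball, and with $\varepsilon/2$ in place of $\varepsilon$. This produces $\delta>0$ and functionals $x_1^*, \dots, x_m^*\in X^*$ such that $\vert f(u)-f(v)\vert<\varepsilon/2$ whenever $u,v\in B_X$ satisfy $\vert x_j^*(u-v)\vert<\delta$ for all $j=1,\dots,m$. Next I would set
$$
Y=\bigcap_{j=1}^{m}\ker(x_j^*),
$$
which, being the intersection of finitely many kernels of continuous linear functionals, is a finite codimensional subspace of $X$ (of codimension at most $m$).

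Now take any $y\in Y$ with $\Vert y\Vert=1$. Then both $y$ and $0$ lie in $B_X$, and $x_j^*(y-0)=x_j^*(y)=0<\delta$ for every $j$, so the \emph{wb}-continuity estimate applies to the pair $(y,0)$ and yields $\vert f(y)-f(0)\vert<\varepsilon/2$. Since $f(0)=0$, this gives $\vert f(y)\vert<\varepsilon/2$, and taking the supremum over all such $y$ produces $\Vert f\Vert_Y\leq\varepsilon/2<\varepsilon$, as required.

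There is no substantial obstacle here; the only care needed is bookkeeping. The role of the assumption $f(0)=0$ is precisely to let a single application of \emph{wb}-continuity to the pair $(y,0)$ control $f(y)$ itself rather than merely an oscillation, and passing from $\varepsilon/2$ to $\varepsilon$ secures the strict inequality in the conclusion.
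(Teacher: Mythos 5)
Your proof is correct and follows essentially the same route as the paper: apply \emph{wb}-continuity on the unit ball to obtain $\delta$ and $x_1^*,\dots,x_m^*$, take $Y=\cap_{j=1}^m \ker(x_j^*)$, and use $f(0)=0$ to turn the oscillation bound for the pair $(y,0)$ into a bound on $|f(y)|$. The only (harmless) difference is your $\varepsilon/2$ bookkeeping to guarantee the strict inequality, which the paper leaves implicit.
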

\begin{proof} Given $\varepsilon>0$ there are $\delta >0$ and functionals $x^*_1, \dots , x^*_m \in X^*$ such that if $x$ belongs to the unit ball $B$ of $X$ and $\vert x^*_j (x) \vert <\delta$ for every $j=1, \dots , m$, then $\vert f(x)  \vert < \varepsilon$. Now if we consider the finite codimensional subspace $Y =\cap_{j=1}^m ker (x^*_j)$ we have that  $\|f\|_{Y}<\varepsilon$.
\end{proof}

\begin{prop} \label{flat}
Let $1<p <\infty$, let $X$ be a Banach space such that all polynomials of degree $< p$ are {\it wb}-continuous, and consider a function $f:X \to \mathbb R$. For each $x\in X$, the following conditions are equivalent:
\begin{enumerate}
\item $f$ is asymptotically $p$-smooth at $x$
\item $\overline{\rho}_{f}(t;x)=O(t^p)$
\end{enumerate}
\end{prop}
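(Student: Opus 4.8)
The plan is to prove the two implications separately, the decisive structural input being Lemma \ref{wb}: since every polynomial of degree $<p$ is \emph{wb}-continuous, each homogeneous component $P_j$ of such a polynomial can be forced below any prescribed $\varepsilon$ on a suitable finite codimensional subspace. In other words, on these spaces a polynomial of degree $<p$ is ``asymptotically negligible'', and this is exactly what makes the polynomial $P_x$ invisible to the modulus $\overline{\rho}_{f}(\cdot\,;x)$.

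For the implication (1) $\Rightarrow$ (2) I would start from the data of asymptotic $p$-smoothness: constants $K_x,\delta_x>0$ and a polynomial $P_x=P_1+\dots+P_k$ (with $k$ the largest integer $<p$, each $P_j$ being $j$-homogeneous and $P_x(0)=0$), together with the subspaces $H_{(t,x)}$. Fix $0<t<\min\{\delta_x,1\}$. Applying Lemma \ref{wb} to each $P_j$ with $\varepsilon=t^{\,p-j}$ (legitimate since $P_j$ is \emph{wb}-continuous and $P_j(0)=0$) yields a finite codimensional subspace $Y_j$ with $\|P_j\|_{Y_j}<t^{\,p-j}$; put $Y=\bigcap_{j=1}^k Y_j$, still of finite codimension. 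Setting $H=H_{(t,x)}\cap Y$ and using $j$-homogeneity, for $h\in H$ with $\|h\|\le t$ one gets $|P_j(h)|=\|h\|^{j}\,|P_j(h/\|h\|)|\le \|h\|^{j}\,t^{\,p-j}\le t^{\,p}$, whence
$$
|f(x+h)-f(x)|\le |f(x+h)-f(x)-P_x(h)|+\sum_{j=1}^k|P_j(h)|\le K_x\|h\|^p+k\,t^p\le (K_x+k)\,t^p .
$$
Taking the supremum over such $h$ and then the infimum over finite codimensional subspaces gives $\overline{\rho}_{f}(t;x)\le (K_x+k)t^p$, i.e. $\overline{\rho}_{f}(t;x)=O(t^p)$.

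For (2) $\Rightarrow$ (1) I would take $P_x\equiv 0$ (no nontrivial polynomial can be read off from the modulus, and by the paragraph above none is needed). Writing $\overline{\rho}_{f}(t;x)\le Ct^p$ for $0<t<\tau$, the definition of the modulus gives, for every scale $s$, a finite codimensional subspace on which $|f(x+h)-f(x)|\le 2Cs^p$ for all $\|h\|\le s$. The point is to upgrade this family of \emph{uniform} bounds (one constant per ball) into a single \emph{pointwise} estimate $|f(x+h)-f(x)|\le K\|h\|^p$ valid down to $h=0$. I would do this by a dyadic decomposition: choosing subspaces $H_n$ adapted to the scales $t_n=t\,2^{-n}$ with $|f(x+h)-f(x)|\le 2Ct_n^p$ for $h\in H_n$, $\|h\|\le t_n$, one checks that on a common refinement a vector $h$ with $t_{n+1}<\|h\|\le t_n$ satisfies $|f(x+h)-f(x)|\le 2Ct_n^p\le 2^{p+1}C\|h\|^p$, so that $K=2^{p+1}C$ works.

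The main obstacle is precisely here: a single vector $h\in H_{(t,x)}$ must satisfy the estimate at \emph{its own} scale $\|h\|$, so the subspace used for the scale-$t$ requirement has to control $f$ simultaneously at all smaller dyadic scales. Naively intersecting the $H_n$ over all $n$ need not leave a finite codimensional subspace, so the hard part will be producing one subspace that is ``inside $H_n$ at scale $t_n$'' for every $n$ at once. Overcoming this is the crux, and it is where I expect the weak-continuity hypothesis on the space to enter in an essential way (for instance by choosing the $H_n$ nested and controlling the codimensions, or by absorbing the fine-scale behaviour into a negligible polynomial via Lemma \ref{wb}), rather than in the comparatively routine estimates above.
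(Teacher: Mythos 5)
Your proof of $(1)\Rightarrow(2)$ is correct and is essentially the paper's own argument: the paper applies Lemma \ref{wb} to each homogeneous component $P_j$ with $\varepsilon=k^{-1}t^{p-j}$ (so that the $k$ terms sum to exactly $t^p$, giving the constant $K_x+1$ instead of your $K_x+k$), intersects the resulting subspace with $H_{(t,x)}$, and concludes by the same triangle inequality. That direction is fine.

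The gap is in $(2)\Rightarrow(1)$, which you leave unfinished, and the obstacle you flag is real: the intersection $\bigcap_n H_n$ of subspaces adapted to the scales $t2^{-n}$ has no reason to be finite codimensional, and the \emph{wb}-continuity hypothesis cannot rescue the dyadic scheme, since it only controls polynomials and your candidate polynomial is $P_x\equiv 0$. The paper disposes of this implication with the single sentence ``It is clear that (2) implies (1)'', and it \emph{is} clear precisely when the defining inequality of asymptotic $p$-smoothness is read with $K_x t^p$ on the right-hand side rather than the pointwise bound $K_x\Vert h\Vert^p$: then one takes $P_x=0$, $K_x=2C$, and for each $t$ the one subspace witnessing $\overline{\rho}_{f}(t;x)\le Ct^p$, and there is nothing left to prove. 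That scale-$t$ form is also the only one used in the sequel (Proposition \ref{flatuniform} and Theorem \ref{envelope} work exclusively with the modulus). Under the literal $\Vert h\Vert^p$ reading that your dyadic argument is attacking, the implication actually fails: on $c_0$ (where all polynomials are \emph{wb}-continuous) the continuous function $f(h)=\sum_n\min(\vert h_n\vert,4^{-n})$ has $\overline{\rho}_{f}(t;0)=O(t^2)$ (pass to a tail subspace with $4^{-N}\le 3t^2$), yet every finite codimensional subspace contains vectors $h\approx \pm 4^{-n}e_n$ with $f(h)\approx 4^{-n}\gg\Vert h\Vert^2$, and no linear term $P_0=x^*$ can absorb this for both signs since that would force $\vert x^*(e_n)\vert\to 1$, impossible for $x^*\in\ell_1$. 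So the fix is not a cleverer choice of subspaces but adopting the scale-$t$ form of the estimate, under which $(2)\Rightarrow(1)$ is immediate from the definition of the modulus.
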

\begin{proof} It is clear that $(2)$ implies $(1)$. Next we are going to prove the reverse implication. Suppose that   $f$ is asymptotically  $p$-smooth at $x$, and then there exist $K_x>0$,  $\delta_x>0$ and a polynomial $P_x$ of degree $<p$ on $X$,  with $P_x(0)=0$,  such that for each  $0< t < \delta_x$ there exists a finite codimensional subspace $H_{(t,x)}$ of $X$ verifying that if $h \in H_{(t,x)}$ and $\Vert h \Vert \leq t$ then
$$
|f(x+h)-f(x)-P_x(h)|\leq K_x \,\|h\|^p.
$$
Let  $P_x=P_1+\dots+P_k$, where $P_j$ is a $j$-homogeneous polynomial for  $j=1,\dots, k$. Consider a fixed $t$ with $0<t <\delta_x$.  By applying Lemma \ref{wb}  to each $P_j$, we obtain a finite codimensional subspace $Y=Y_{(t,x)}$ of $X$  such that  $\Vert P_j \Vert_{Y} < k^{-1} \, t^{p-j} $ for each $j=1, \cdots, k$. Now if  $h\in H$ with  $0<\Vert h \Vert \leq t$ we  have that
$$
\vert P(h)\vert \leq \vert P_1(h) \vert  + \dots + \vert P_k(h) \vert
= \Vert h \Vert \cdot \left| P_1\left(\frac{h}{\Vert h \Vert}\right) \right|  + \dots + \Vert h \Vert^k \cdot \left| P_k \left(\frac{h}{\Vert h \Vert}\right) \right| \leq
$$
$$
\leq t \frac{t^{p-1}}{k} + \dots + t^k \frac{t^{p-k}}{k} = t^p
$$
The subspace $Z_{(t,x)}= H_{(t,x)}\cap Y_{(t,x)}$ is finite codimensional and, for every  $h\in Z$ with $\Vert h \Vert \leq t$ we deduce that
$$
\vert f(x+h)-f(x) \vert \leq \vert f(x+h)-f(x)-P(h)\vert + \vert P(h) \vert
\leq K_x \,t^p + t^p = (K_x +1) t^p.
$$
Thus $\overline{\rho}_{f}(t;x)=O(t^p)$.
\end{proof}

The uniform version of the above result can be obtained along the same lines:

\begin{prop} \label{flatuniform}
Let $1<p <\infty$, let $X$ be a Banach space such that all polynomials of degree $< p$ are wb-continuous, and consider a function $f:X \to \mathbb R$. For each subset $S$ of $X$, the following conditions are equivalent:
\begin{enumerate}
\item $f$ is uniformly asymptotically $p$-smooth on $S$
\item $\overline{\rho}_{f}(t,S)=O(t^p)$
\end{enumerate}
\end{prop}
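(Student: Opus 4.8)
The plan is to run the proof of Proposition \ref{flat} pointwise, keeping careful track of the constants so that they stay independent of the base point $x\in S$. The key observation is that, by the very definition of the uniform modulus, $\overline{\rho}_f(t;S)=\sup_{x\in S}\overline{\rho}_f(t;x)$, so both conditions $(1)$ and $(2)$ are really uniform statements about the family $\{\overline{\rho}_f(\cdot\,;x)\}_{x\in S}$, and the whole issue is the propagation of uniform bounds.

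For the implication $(2)\Rightarrow(1)$ I would simply note that $\overline{\rho}_f(t;S)=O(t^p)$ furnishes constants $C>0$ and $\tau>0$ with $\overline{\rho}_f(t;x)\le Ct^p$ for \emph{every} $x\in S$ and every $0<t<\tau$. Choosing $P_x\equiv 0$, $K_x=C$ and $\delta_x=\tau$ uniformly in $x$ then shows, exactly as in the pointwise case, that $f$ is uniformly asymptotically $p$-smooth on $S$, the constants being independent of $x$ by construction.

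The substance lies in $(1)\Rightarrow(2)$. Here I would fix the uniform constants $K$ and $\delta$ given by uniform asymptotic $p$-smoothness, and record that every $P_x$ has degree at most $k$, where $k$ is the largest integer strictly less than $p$; this $k$ is common to all $x\in S$. For a fixed $x\in S$ and a fixed $0<t<\delta$, I would split $P_x=P_{x,1}+\cdots+P_{x,k}$ into homogeneous components and apply Lemma \ref{wb} to each $P_{x,j}$ to obtain a finite codimensional subspace $Y_{(t,x)}$ with $\|P_{x,j}\|_{Y_{(t,x)}}<k^{-1}t^{p-j}$. Summing $|P_{x,j}(h)|\le \|h\|^j\,\|P_{x,j}\|_{Y_{(t,x)}}<k^{-1}t^p$ over the $k$ components yields $|P_x(h)|\le t^p$ for $h\in Y_{(t,x)}$ with $\|h\|\le t$, with a bound that is the \emph{same} for every $x\in S$. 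Intersecting with the subspace $H_{(t,x)}$ coming from the definition and applying the triangle inequality on $Z_{(t,x)}=H_{(t,x)}\cap Y_{(t,x)}$ gives $|f(x+h)-f(x)|\le(K+1)t^p$, hence $\overline{\rho}_f(t;x)\le(K+1)t^p$. Since $K+1$ does not depend on $x$, taking the supremum over $x\in S$ yields $\overline{\rho}_f(t;S)\le(K+1)t^p$, that is, $\overline{\rho}_f(t;S)=O(t^p)$.

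The only point going beyond the pointwise computation is this propagation of uniformity, and I expect it to be the main—though mild—subtlety. The natural worry is that the polynomials $P_x$, and in particular their norms, might vary wildly as $x$ ranges over $S$. What rescues the argument is that we never need any uniform control on $P_x$ itself: Lemma \ref{wb} annihilates each homogeneous component $P_{x,j}$ on a suitable finite codimensional subspace at the sole cost of a possibly large codimension, and codimension is free in the definition of $\overline{\rho}_f(t;x)$, which is an infimum over all finite codimensional subspaces. Thus the only uniformity actually required is that of the constants $K$ and $\delta$, supplied by the hypothesis of uniform asymptotic $p$-smoothness, together with the common degree bound $k<p$ that fixes the number of terms in the telescoped polynomial estimate.
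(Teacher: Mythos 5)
Your proposal is correct and follows exactly the route the paper intends: the paper gives no separate argument for Proposition \ref{flatuniform}, stating only that it ``can be obtained along the same lines'' as Proposition \ref{flat}, and your write-up is precisely that pointwise argument with the constants $K$, $\delta$ and the degree bound $k<p$ tracked uniformly over $x\in S$, the subspaces from Lemma \ref{wb} being allowed to depend on $x$ since the modulus is an infimum over finite codimensional subspaces. Your identification of this propagation of uniformity as the only new content matches the paper's (implicit) reasoning.
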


\section{Asymptotic smoothness of convex envelopes}

Now our result about asymptotic smoothness of convex envelopes will follow easily from the previous developments.

\begin{theorem}\label{envelope}
Let let $1< p < \infty$ and let  $X$ be a Banach space such that all polynomials of degree $< p$ are wb-continuous. Consider a function $f: X \to \mathbb{R}$ continuous and bounded below. If $f$ is uniformly asymptotically $p$-smooth on $X$, then also $\text{\sl conv}f$ is uniformly asymptotically $p$-smooth on $X$.
\end{theorem}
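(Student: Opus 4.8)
The plan is to transport everything to the language of the modulus $\overline{\rho}$ and to exploit the fact that translating all the nodes of a convex combination by a common vector again yields a convex combination. Write $g=\mbox{\sl conv}f$. First I would note that $g$ is convex by construction and, since $g\leq f$ with $f$ continuous (hence locally bounded above), $g$ is locally bounded above and therefore continuous; being bounded below as well, $g$ is a genuine continuous convex function, so Lemma \ref{absolute} applies to it. Because all polynomials of degree $<p$ are assumed wb-continuous, Proposition \ref{flatuniform} says that for any function on $X$, uniform asymptotic $p$-smoothness on $X$ is equivalent to its modulus being $O(t^p)$. Thus the hypothesis provides constants $C>0$ and $\delta>0$ with $\overline{\rho}_{f}(t;X)\leq Ct^{p}$ for $0<t<\delta$, and it will suffice to establish the same estimate $\overline{\rho}_{g}(t;X)\leq Ct^{p}$; the conclusion then follows by invoking Proposition \ref{flatuniform} a second time, now applied to $g$.

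The heart of the argument is a one-sided estimate. Fix $x\in X$, $0<t<\delta$ and $\varepsilon>0$. By the infimum description of the convex envelope there is a \emph{finite} convex combination $x=\sum_{i=1}^{k}\lambda_{i}x_{i}$ with $\sum_{i=1}^{k}\lambda_{i}f(x_{i})<g(x)+\varepsilon$. For each of the finitely many points $x_{i}$, the bound $\overline{\rho}_{f}(t;X)\leq Ct^{p}$ furnishes a finite codimensional subspace $H_{i}$ with $|f(x_{i}+h)-f(x_{i})|\leq Ct^{p}+\varepsilon$ whenever $h\in H_{i}$ and $\Vert h\Vert\leq t$. Since the index set is finite, $H:=\bigcap_{i=1}^{k}H_{i}$ is again finite codimensional. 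Now for $h\in H$ with $\Vert h\Vert\leq t$, the vector $x+h=\sum_{i}\lambda_{i}(x_{i}+h)$ is itself a convex combination, so
$$ g(x+h)\leq\sum_{i=1}^{k}\lambda_{i}f(x_{i}+h)\leq\sum_{i=1}^{k}\lambda_{i}f(x_{i})+Ct^{p}+\varepsilon<g(x)+Ct^{p}+2\varepsilon. $$
Hence $\inf_{dim(X/H)<\infty}\sup_{h\in H,\,\Vert h\Vert\leq t}\bigl(g(x+h)-g(x)\bigr)\leq Ct^{p}+2\varepsilon$, and letting $\varepsilon\to 0$ shows this one-sided modulus is at most $Ct^{p}$, uniformly in $x$. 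Because $g$ is continuous and convex, Lemma \ref{absolute} identifies this one-sided quantity with $\overline{\rho}_{g}(t;x)$, whence $\overline{\rho}_{g}(t;X)\leq Ct^{p}=O(t^{p})$, and Proposition \ref{flatuniform} closes the proof.

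I expect the delicate points to be structural rather than computational. The key is that the envelope is an infimum over \emph{finite} convex combinations, which is what allows the single translate $h$ to be shared by all the $x_{i}$ and the finitely many $H_{i}$ to be intersected into one finite codimensional $H$; no Carath\'eodory-type bound on $k$ is required, only finiteness of each representation. A second subtlety is that $H$ is permitted to depend on $\varepsilon$ (through the chosen representation), which is legitimate precisely because $\overline{\rho}_{g}(t;x)$ is an infimum over $H$, so the $\varepsilon$ disappears in the limit. Third, the uniformity of $C$ across all the nodes $x_{i}$ is exactly what the uniform modulus $\overline{\rho}_{f}(t;X)=\sup_{x}(\cdots)$ supplies, and this is the only place where \emph{uniform} (rather than pointwise) asymptotic $p$-smoothness of $f$ is genuinely used. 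Finally, convexity of $g$ enters solely through Lemma \ref{absolute}, sparing the separate estimation of $g(x)-g(x+h)$; this is also why the method controls $\mbox{\sl conv}f$ while giving no two-sided control for the non-convex $f$ itself.
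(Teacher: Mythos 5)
Your proposal is correct and follows essentially the same route as the paper: reduce to the modulus via Proposition \ref{flatuniform}, pick a finite convex combination nearly attaining $\mbox{\sl conv}f(x)$, translate all its nodes by a common $h$ taken from the intersection of the finitely many finite-codimensional subspaces, and convert the resulting one-sided estimate into the two-sided modulus via Lemma \ref{absolute}. Your handling of the $\varepsilon$'s (and the remark on the continuity of $\mbox{\sl conv}f$) is in fact slightly more careful than the paper's, but the argument is the same.
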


\begin{proof}
Since  $f$ is uniformly asymptotically $p$-smooth on $X$, by Proposition \ref{flatuniform} there exist  $K>0$ and $\delta >0$ such that
for each $t>0$ and $x \in X$ there is a finite codimensional subspace $H_{(t, x)}$ verifying that if $h\in H_{(t,x)}$ and $\Vert h \Vert \leq t < \delta$ then
$$
|f(x+h)-f(x)|\leq K \, t^p.
$$
Now fix $t>0$ and $x\in X$. By the definition of convex envelope, we can choose $x_{1}, \dots,x_{m} \in X $ and  $\lambda_{1}, \dots,\lambda_{m}\in \mathbb R$ with  $\lambda_i  \geq 0$ for each  $i=1, \dots, m$  and $\sum_{i=1}^{m} \lambda_i=1$, such that
$$
\sum_{i=1}^{m} \lambda_{i} f(x_{i}) \leq \text{\sl conv}f(x) + t^p.
$$
Then for each $i=1, \dots, m$ there is a finite codimensional subspace $H_{(t, x_i)}$ verifying that if $h\in H_{(t, x_i)}$ and $\Vert h \Vert \leq t < \delta$ we have
$$
|f(x_{i}+h)-f(x_{i})|\leq K \, t^p, i=1,\dots, m.
$$
Therefore if we consider the finite codimensional subspace $H =\cap_{i=1}^{m} H_{(t, x_i)}$ we have that for every  $h \in H$ with  $\Vert h \Vert \leq t < \delta$:
$$
\text{\sl conv}f(x+h) - \text{\sl conv}f(x) \leq
\sum_{i=1}^m \lambda_i f(x_i+h)- \sum_{i=1}^m \lambda_i f(x_i) + t^p
\leq K \, t^p  \sum_{i=1}^{n} \lambda_i + t^p
= (K + 1) t^p.
$$
Taking into account Lemma \ref{absolute} we obtain that $\overline{\rho}_{\text{\sl conv}f}(t,X)=O(t^p)$, and thus $\text{\sl conv}f$ is  uniformly asymptotically  $p$-smooth on $X$.
\end{proof}

\begin{remark} Let $P$ be a $N$-homogeneous polynomial on a Banach space $X$, and let $A: X \times \cdots \times X \to \mathbb R$ denote the associated symmetric $N$-linear form. From the binomial identity
$$
P(x+h) = \sum_{j=0}^{N} \binom{N}{j} A(x, \stackrel{N-j}{\dots}, \, x, h, \stackrel{j}{\dots}, \, h)
$$
it is plain that $P$ is uniformly $U^N$-smooth on $X$. Thus if all polynomials of degree $< N$ on $X$ are {\it wb}-continuous, we obtain that $\text{\sl conv}P$ is uniformly asymptotically  $N$-smooth on $X$.

Recall that a $N$-homogeneous polynomial on a Banach space $X$ is said to be {\it separating} (see e. g. \cite{Extracta} or \cite{Glez-Go}) if it separates the origin from the unit sphere, that is, there exists $\alpha >0$ such that $P(x) \geq \alpha$ for every $x\in X$ with $\Vert x \Vert=1$. Thus $P(x) \geq \alpha \Vert x \Vert^N$ for every $x\in X$, and $N$ has to be an even integer. It can be shown that in this case $\text{\sl conv}P$ is a positive, convex, $N$-homogeneous separating function. It follows from Theorem 3.5 in \cite{GJT} that, if $X$ admits a $N$-homogeneous separating polynomial and all polynomials of degree $< N$ on $X$ are {\it wb}-continuous, then $X$ admits an equivalent norm with modulus of asymptotic uniform smoothness of power type $N$.
\end{remark}

\section{Modulus of convexity of a polynomial norm.}

In this section we consider Banach spaces with a polynomial norm. Let $(X, \Vert \cdot \Vert)$ be a Banach space and  let $N$ be  an even integer. We say that $\Vert \cdot \Vert$ is a {\it polynomial norm} of degree $N$ on $X$ if there is an $N$-homogeneous polynomial $P$ on $X$ such that for every  $x\in X$,
$$
P(x)=\Vert x \Vert^N.
$$

\begin{remark}  Note that if there is a {\it convex}  $N$-homogeneous separating polynomial $P$ on $(X, \Vert \cdot \Vert)$, with $N$ being an even integer, then the space admits an equivalent polynomial norm. Indeed, the
expression:
$$
||| x ||| = \inf \Big{\{} \lambda >0 \, : \, P \left(\frac{x}{\lambda}\right)=1 \Big{\}}
$$
defines a norm on $X$ verifying that $P(x)=||| x |||^{N}$. Moreover, since  $P$ is a separating polynomial, there is $\alpha>0$ such that for
all $x\in X$,
$$
\alpha \Vert x \Vert^N \leq P(x)=||| x |||^{N} \leq \Vert P \cdot \Vert \Vert  x
\Vert^N
$$
\end{remark}
\noindent

\

The main result in this section shows that a polynomial  norm of degree $N$ has modulus of uniform convexity of power type $N$.  In order to obtain this, we will use the notion of $p$-uniformly convex norm introduced in \cite{Ball}  (see also \cite{Fiegel}) as follows. Let  $p>1$, for a Banach space  $(X, \Vert \cdot \Vert)$ the norm $\Vert \cdot \Vert$ is said to be $p$-{\it uniformly convex} if there exists a constant $K>0$ such that for every $x,y\in X$,
$$
2\Vert x\Vert^p + 2\frac{1}{K}\Vert y \Vert^p \leq \Vert x+y
\Vert^p
 + \Vert x-y \Vert^p.
$$
The best constant $K>0$ satisfying the above inequality is called the $p$-uniform convexity constant of the norm. In \cite{Ball} the authors  obtain
the following characterization of $p$-uniform convexity of a norm :

\begin{prop} \cite{Ball}  Let  $(X, \Vert \cdot \Vert)$ be a Banach space and $1<p< \infty$. The following are equivalent:

\begin{enumerate}
\item The norm $\Vert \cdot \Vert$ has modulus of uniform convexity of power type $p$.
\item The norm $\Vert \cdot \Vert$ is $p$-uniformly convex.
\end{enumerate}
\end{prop}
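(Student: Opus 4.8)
The plan is to prove the two implications by passing to the reformulation of $p$-uniform convexity obtained from the substitution $a=(x+y)/2$, $b=(x-y)/2$ (so that $a+b=x$ and $a-b=y$): dividing the defining inequality by $2$, the norm is $p$-uniformly convex, with constant $K$, if and only if there is $c=1/K>0$ such that
\[
\left\Vert \frac{x+y}{2}\right\Vert^{p}+c\left\Vert \frac{x-y}{2}\right\Vert^{p}\le \frac{\Vert x\Vert^{p}+\Vert y\Vert^{p}}{2}\qquad(\star)
\]
for all $x,y\in X$. Thus condition $(2)$ is exactly $(\star)$, and it is $(\star)$ that I will relate to the power-type modulus.

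For $(2)\Rightarrow(1)$, which is the elementary direction, I would fix unit vectors $x,y$ with $\Vert x-y\Vert=\varepsilon$ and apply the defining inequality to $a=(x+y)/2$ and $b=(x-y)/2$, whose sum $x$ and difference $y$ have norm $1$. This gives $\Vert a\Vert^{p}\le 1-\frac1K\Vert b\Vert^{p}=1-\varepsilon^{p}/(K2^{p})$, whence
\[
1-\left\Vert \frac{x+y}{2}\right\Vert\ \ge\ 1-\Big(1-\tfrac{\varepsilon^{p}}{K2^{p}}\Big)^{1/p}\ \ge\ \frac{\varepsilon^{p}}{pK2^{p}},
\]
the last step being the elementary estimate $1-(1-s)^{1/p}\ge s/p$ for $s\in[0,1]$, $p\ge1$. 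Taking the infimum over admissible $x,y$ yields $\delta_X(\varepsilon)\ge C\varepsilon^{p}$ with $C=(pK2^{p})^{-1}$, i.e. $(1)$.

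For $(1)\Rightarrow(2)$ I would prove $(\star)$, starting with the \emph{equal-norm} case $\Vert x\Vert=\Vert y\Vert$. By homogeneity one may take $\Vert x\Vert=\Vert y\Vert=1$, and then the hypothesis $\delta_X(\Vert x-y\Vert)\ge C\Vert x-y\Vert^{p}$ gives $\Vert (x+y)/2\Vert\le 1-C\Vert x-y\Vert^{p}$; using $(1-t)^{p}\le 1-t$ for $t\in[0,1]$, $p\ge1$, one obtains
\[
\left\Vert \frac{x+y}{2}\right\Vert^{p}+C2^{p}\left\Vert \frac{x-y}{2}\right\Vert^{p}\ \le\ \big(1-C\Vert x-y\Vert^{p}\big)+C\Vert x-y\Vert^{p}\ =\ 1\ =\ \frac{\Vert x\Vert^{p}+\Vert y\Vert^{p}}{2},
\]
so $(\star)$ holds for vectors of equal norm with $c_{0}=C2^{p}$.

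The main obstacle is to upgrade this to arbitrary $x,y$. The inequality $(\star)$ is jointly homogeneous of degree $p$ but is \emph{not} invariant under rescaling $x$ and $y$ separately, so it cannot be reduced to the sphere; the naive device of interpolating between $(x,y)$ and its symmetrization $(\Vert x\Vert/\Vert y\Vert)\,y$ by convexity of $\Vert\cdot\Vert^{p}$ is too lossy (one checks it already fails near the diagonal $\Vert x\Vert=\Vert y\Vert$, even in a Hilbert space, where $(\star)$ in fact holds with $c=1$ by the parallelogram law). The correct argument, which is the technical core of Figiel's theorem \cite{Fiegel} and of \cite{Ball}, writes the second difference $\tfrac12(\Vert u+w\Vert^{p}+\Vert u-w\Vert^{p})-\Vert u\Vert^{p}$, via convexity of $t\mapsto\Vert u+tw\Vert^{p}$, as $\tfrac12\int_{-1}^{1}(1-|\tau|)\,\tfrac{d^{2}}{d\tau^{2}}\Vert u+\tau w\Vert^{p}\,d\tau$ and bounds the integrand below, uniformly over all norm ratios, by extracting the needed second-order information from the power-type modulus at every scale; integrating then yields $(\star)$ for all $x,y$ with a positive (smaller) constant $c$, and hence $p$-uniform convexity with $K=1/c$. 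I expect this uniformization across norm ratios to be the hard part, the equal-norm case and the converse implication being routine.
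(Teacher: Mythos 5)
The paper does not actually prove this proposition: it is quoted from \cite{Ball} (where it rests on Figiel's work \cite{Fiegel}), so there is no internal argument to compare yours against. Judged on its own terms, your proposal is correct and complete for the implication $(2)\Rightarrow(1)$ and for the equal-norm case of $(1)\Rightarrow(2)$: the substitution $a=(x+y)/2$, $b=(x-y)/2$, the concavity estimate $1-(1-s)^{1/p}\ge s/p$, and the use of $(1-t)^p\le 1-t$ are all sound and give the constants you state.

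However, the implication $(1)\Rightarrow(2)$ is not proved. The passage from the equal-norm inequality to your inequality $(\star)$ for arbitrary $x,y$ is precisely the substantive content of the proposition, and your treatment of it is a description of an intended strategy rather than an argument: you assert that the second difference $\tfrac12(\Vert u+w\Vert^p+\Vert u-w\Vert^p)-\Vert u\Vert^p$ can be written as an integral of a second derivative (which need not exist pointwise for a general norm, so one must already pass to one-sided derivatives or difference quotients) and that the integrand can be bounded below uniformly over all norm ratios using the power-type modulus ``at every scale'' --- but that uniform lower bound \emph{is} the claim to be proved, and no estimate is supplied. To close the gap one needs a genuine quantitative step, e.g.\ Figiel's lemma that a power-type lower bound on $\delta_X$ yields, for all $x,y$, an inequality of the form $\Vert\frac{x+y}{2}\Vert\le\frac{\Vert x\Vert+\Vert y\Vert}{2}-c\,\Vert x-y\Vert^p/(\Vert x\Vert+\Vert y\Vert)^{p-1}$, followed by the scalar manipulation converting this into $(\star)$; alternatively the careful second-difference argument of \cite{Fiegel} or \cite{Ball}. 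You correctly locate the difficulty, but locating it is not the same as resolving it, so as written the proof of $(1)\Rightarrow(2)$ is incomplete.
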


Next we recall the notion of  uniformly convex function introduced in \cite{BGHV}. Given a convex function $f:X \to \RR$  defined on a Banach space $X$, the {\it modulus of convexity of $f$}  is defined as the function $\delta_{f}:(0,\infty) \to [0,\infty)$ given by:
$$
\delta_{f}(t):= \inf {\Big{\{}} \frac{1}{2} f(x) + \frac{1}{2} f(y) - f(\frac{x+y}{2}) : \Vert x-y \Vert = t; \, x,y \in X {\Big{\}}}.
$$
The function $f$ is said to be {\it uniformly convex}  if $\delta_{f}(t)>0$ for all $t>0$. In \cite{BGHV} the authors  show that
the norm $\Vert \cdot \Vert$ has modulus of uniform convexity of power type $p$ if and only if the convex function $f(x)= \Vert x \Vert^p$ is
uniformly convex. This result is essential in order to prove our main  theorem:

\begin{theorem} Let $N$ be an even integer and let $(X, \Vert \cdot \Vert)$ be a Banach space such that $\Vert \cdot \Vert$  is a polynomial norm of degree $N$. Then $\Vert \cdot \Vert$ has modulus of uniform convexity of power type $N$.
\end{theorem}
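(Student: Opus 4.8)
The plan is to deduce the result from the criterion of \cite{BGHV} recalled just above the statement: it suffices to prove that the convex function $f(x)=\Vert x\Vert^{N}=P(x)$ is uniformly convex, i.e. that $\delta_{f}(t)>0$ for every $t>0$. (Note that $P=\Vert\cdot\Vert^{N}$ is convex, being the $N$-th power of a norm.) So I would fix $t>0$ and an arbitrary pair $x,y\in X$ with $\Vert x-y\Vert=t$, and bound $\tfrac12 P(x)+\tfrac12 P(y)-P(\tfrac{x+y}{2})$ below by a positive quantity depending only on $N$ and $t$.

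The first step is to reduce to one real variable. Writing $u=\tfrac{x+y}{2}$ and $v=\tfrac{x-y}{2}$ (so $\Vert v\Vert=t/2$) and setting $\psi(s)=P(u+sv)$, the binomial identity for the symmetric $N$-linear form $A$ associated to $P$ (as in the Remark above) shows that $\psi$ is a real polynomial of degree $\le N$ whose coefficient of $s^{N}$ is exactly $A(v,\dots,v)=P(v)=\Vert v\Vert^{N}=(t/2)^{N}$, \emph{independently of $u$}. Moreover $\psi$ is convex, being the composition of the affine map $s\mapsto u+sv$ with the convex function $P$; hence $\psi''\ge 0$ on $\mathbb{R}$. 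Since $\psi(1)=P(x)$, $\psi(-1)=P(y)$ and $\psi(0)=P(\tfrac{x+y}{2})$, the standard second-difference identity gives
\begin{equation*}
\tfrac12 P(x)+\tfrac12 P(y)-P\Big(\tfrac{x+y}{2}\Big)=\tfrac12\big(\psi(1)-2\psi(0)+\psi(-1)\big)=\tfrac12\int_{-1}^{1}(1-|s|)\,\psi''(s)\,ds .
\end{equation*}
Here $\psi''$ is a polynomial of degree exactly $N-2$, nonnegative on $\mathbb{R}$, whose leading coefficient equals $N(N-1)(t/2)^{N}$.

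The crux is therefore a statement purely about one-variable polynomials, which I would isolate as a lemma: for each integer $d\ge 0$ there is a constant $M_{d}>0$ such that every polynomial $q$ of degree $\le d$ that is nonnegative on $[-1,1]$ satisfies $a_{d}\le M_{d}\int_{-1}^{1}(1-|s|)q(s)\,ds$, where $a_{d}$ is the coefficient of $s^{d}$. I would prove this by compactness in the finite-dimensional space $V_{d}$ of polynomials of degree $\le d$: the slice $K=\{q\in V_{d}: q\ge 0 \text{ on } [-1,1],\ \int_{-1}^{1}(1-|s|)q(s)\,ds=1\}$ is compact, because if a sequence in it had norms tending to infinity, the normalized limit would be a nonzero element of the closed cone $\{q\ge 0\text{ on }[-1,1]\}$ with vanishing weighted integral, forcing it to vanish on $(-1,1)$ and hence identically --- a contradiction; the continuous functional $q\mapsto a_{d}$ is then bounded on $K$. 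Applying this with $d=N-2$ to $q=\psi''$ yields
\begin{equation*}
\tfrac12 P(x)+\tfrac12 P(y)-P\Big(\tfrac{x+y}{2}\Big)=\tfrac12\int_{-1}^{1}(1-|s|)\psi''(s)\,ds\ \ge\ \frac{N(N-1)}{2\,M_{N-2}}\Big(\frac{t}{2}\Big)^{N},
\end{equation*}
a positive bound independent of the chosen $x,y$. Taking the infimum over admissible pairs gives $\delta_{f}(t)\ge \frac{N(N-1)}{2M_{N-2}}(t/2)^{N}>0$, so $f=\Vert\cdot\Vert^{N}$ is uniformly convex, and the criterion of \cite{BGHV} concludes that $\Vert\cdot\Vert$ has modulus of uniform convexity of power type $N$.

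The main obstacle is the one-variable lemma and, above all, the observation that makes it applicable: although $x,y$ (equivalently $u,v$) range over an enormous set, the reduced polynomials $\psi''$ all share the \emph{same} pinned leading coefficient $N(N-1)(t/2)^{N}$, so that convexity alone (nonnegativity of $\psi''$) together with this single normalization already forces the weighted second difference to stay away from $0$. I would double-check the edge case $N=2$ (where $\psi''$ is a positive constant and the estimate is immediate) and verify the second-difference integral identity, but these are routine; the real content is the compactness bound on the leading coefficient of a nonnegative polynomial by its weighted integral.
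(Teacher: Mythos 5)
Your proof is correct, and it shares the paper's overall strategy: reduce via the criterion of \cite{BGHV} to showing that the convex function $P=\Vert\cdot\Vert^{N}$ is uniformly convex, and then reduce that to a one-variable polynomial inequality by restricting $P$ to the line through $x$ and $y$. Where you genuinely diverge is in the one-variable lemma, and your version is in fact tighter than the paper's. The paper regards the second difference $p_{z,h}(t)=P(z+th)+P(z-th)-2P(z)$ as an element of the full cone $\mathcal{C}_{N}$ of nonnegative convex polynomials of the stated form, and asserts that the evaluation functional $L_{t_{0}}$ attains a strictly positive minimum there; but $\mathcal{C}_{N}$ is a cone containing the zero polynomial, so the infimum of $p\mapsto p(t_{0})$ over all of $\mathcal{C}_{N}$ is actually $0$, and the step only becomes valid after one restricts to the polynomials that really occur, namely those whose top coefficient is pinned at the fixed positive value $2\Vert h\Vert^{N}$. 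That pinning is exactly the observation you isolate ($a_{N}=P(v)=(t/2)^{N}$ independently of $u$), and your compactness lemma --- bounding the leading coefficient of a polynomial nonnegative on $[-1,1]$ by its weighted integral $\int_{-1}^{1}(1-|s|)\,q$ --- converts it into the quantitative bound $\delta_{P}(t)\geq\frac{N(N-1)}{2M_{N-2}}(t/2)^{N}$ with $M_{N-2}$ depending only on $N$. This also delivers, for free, the uniformity of the constant over all polynomial norms of degree $N$ that the paper's subsequent Remark claims to read off from its proof. Two small points to polish in your write-up: take $M_{d}=\max\{\sup_{K}a_{d},\,1\}$ so that $M_{d}>0$ is guaranteed, and record that $\int_{-1}^{1}(1-|s|)\,q=0$ together with $q\geq 0$ on $[-1,1]$ forces $q\equiv 0$, so the lemma is not vacuous in that degenerate case; both are routine.
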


\begin{proof} Let $P$ be be the convex $N$-homogeneous polynomial such that $P(x)= \Vert x \Vert^N$ for each $x\in X$. We will show that
$P:X \to \RR$ is $N$-uniformly convex and then the conclusion will follow from the above mentioned result in \cite{BGHV}. Let $t_0>0$ be fixed, and we have to show that
$$
\delta_{P}(t_0)=\inf {\Big{\{}} \frac{1}{2}P(x) + \frac{1}{2}P(y) -P(\frac{x+y}{2}) : \Vert x-y \Vert = t_0; \, x,y \in X {\Big{\}}} >0.
$$
By denoting $z=\frac{x+y}{2}$ and $h=\frac{x-y}{2t_0}$, this is equivalent to show that
$$
\inf {\Big{\{}}P(z+t_0h) + P(z-t_0h) -2P(z) : \Vert h \Vert = \frac{1}{2}; \, z \in X {\Big{\}}}>0.
$$
To this end, consider the set of polynomials in one variable:
$$
\mathcal{C}_{N}=\{p(t)=a_Nt^N + a_{N-2}t^{N-2} + \dots + a_{2}t^2 : p(t) \text{ is convex and } p(t) \geq 0 \text{ for all } t \in \mathbb R\}.
$$
Using the fact that $P$ is a convex function, it is easy to see that, for fixed $z, h\in X$ with $\Vert h \Vert = \frac{1}{2}$, the polynomial
$p_{z, h}(t) = P(z+th) + P(z-th) -2P(z) \in \mathcal{C}_{N}$. Now the set $\mathcal{C}_{N}$ is closed and convex in the space
$\mathcal{P}_N[t]$ of all real polynomials of degree  $\leq N$, when we consider, for instance, endowed with the norm given by
$$
\Vert p \Vert = \sum_{i=0}^{N} \vert a_i \vert  \qquad \text{for }  p(t)=a_0+a_1t + \dots + a_Nt^N.
$$
Indeed, convexity of $\mathcal{C}_N$ is clear. On the other hand,  assume that $\{p_n\}_{n=1}^{\infty}$ converges to $p$, with $p_n\in \mathcal{C}_{N}$ for each $n\in \NN$.  In particular, $p_{n}(t)\to p(t)$ for every $t\in \RR$.  Since the pointwise limit of non negative and  convex functions is again non negative and  convex, it follows that $p\in\mathcal{C}_N$ and thus $\mathcal{C}_{N}$ is a closed set.  Now, consider  the point
evaluation at $t_0 >0$ on the space $\mathcal{P}_N[t]$, that is, the linear functional $L_{t_0}: \mathcal{P}_N[t] \to \RR$ given  by $L_{t_0}(p)=p(t_0)$. This functional attains its minimum on $\mathcal{C}_N$  at a point,
say $q\in \mathcal{C}_N$. Consequently, for every $p\in \mathcal{C}_N$
$$
p(t_0) \geq q(t_0).
$$
Moreover, $q(t_0) >0$. Indeed, otherwise we would have  $q(0)=0=q(t_0)$ and since $q(t)$ is convex then  $q \equiv 0$, which is not possible. Therefore
$$
\inf \{ p(t_0) : p \in \mathcal{C}_N\} =q(t_0)>0.
$$
In particular, for every $z, h\in X$ with $\Vert h \Vert = \frac{1}{2}$ it follows that
$$
P(z+t_0h)+ P(z-t_0h)-2P(z) \geq q(t_0)>0,
$$
as we required.

\end{proof}

\begin{remark} It is interesting to point out that the proof of previous theorem  gives even more than the $N$-uniform convexity of a polynomial norm of degree $N$. In fact, it is proved that for  each even integer $N$ and $t_0>0$, there exists  a positive constant
$K(N,t_0)>0$ such that, for every polynomial norm $\Vert \cdot \Vert$ of degree $N$ in any Banach space $X$, if $x\in X$ and $\Vert h \Vert =t_0$  we have  that
$$
2 \Vert x \Vert^N + 2 \dfrac{1}{K(N,t_0)}\Vert h \Vert^N   \leq \Vert x+ h \Vert^N + \Vert x-h \Vert^N .
$$
One can wonder if the constant is uniform for every $t_0>0$, that is, if the $N$-uniform convexity constant is independent of the polynomial norm once
the degree is fixed. In this direction, for the cases of even integers $N$ with
$N\leq 6$ the answer is affirmative and this was already proved in \cite{Reznick}. We give an alternative proof of this result:
\end{remark}

\begin{prop} Let $(X, \Vert \cdot \Vert)$ be a Banach space with a polynomial norm
of degree $N$ where $N=2,4,6$. Then $\Vert \cdot \Vert$ has modulus of uniform  convexity of power type $N$, with constant  $1$.
\end{prop}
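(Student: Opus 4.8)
The plan is to establish directly the $N$-uniform convexity inequality with constant $1$, namely that
$$
\Vert x+h \Vert^N + \Vert x-h \Vert^N \geq 2\Vert x \Vert^N + 2\Vert h \Vert^N
$$
for all $x,h\in X$. This is precisely the statement that $\Vert\cdot\Vert$ is $N$-uniformly convex with constant $1$ in the sense of \cite{Ball}, so by the Proposition of \cite{Ball} the norm will then have modulus of uniform convexity of power type $N$. Writing $P(x)=\Vert x\Vert^N$ for the convex $N$-homogeneous polynomial and $A$ for its associated symmetric $N$-linear form, the inequality is equivalent to showing that
$$
Q(x,h):=P(x+h)+P(x-h)-2P(x)-2P(h)\geq 0 \qquad \text{for all } x,h\in X.
$$

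First I would expand $Q$ using the binomial identity already recorded in the Remark after Theorem \ref{envelope}. Since the odd-order terms cancel in $P(x+h)+P(x-h)$ and the two extreme even terms (indices $i=0$ and $i=N/2$) reproduce exactly $2P(x)$ and $2P(h)$, one is left with
$$
Q(x,h)=2\sum_{i=1}^{N/2-1}\binom{N}{2i}\,A(x^{N-2i},h^{2i}),
$$
where $A(x^{N-2i},h^{2i})$ denotes $A$ evaluated at $N-2i$ copies of $x$ and $2i$ copies of $h$.

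The key analytic input is convexity of $P$. For all $z,v\in X$ the map $t\mapsto P(z+tv)$ is convex, and differentiating twice gives $\tfrac{d^2}{dt^2}P(z+tv)\big|_{t=0}=N(N-1)A(z^{N-2},v^2)$; nonnegativity of this second derivative yields $A(z^{N-2},v^2)\geq 0$ for all $z,v\in X$. This controls the ``extreme split'' terms $A(x^{N-2},h^2)$ and, interchanging the roles of $x$ and $h$, also $A(x^2,h^{N-2})$. The decisive point is that for $N\leq 6$ \emph{every} summand in $Q$ is of one of these two shapes: for $N=2$ the sum is empty and $Q\equiv 0$ (the parallelogram law); for $N=4$ the only term is $12\,A(x^2,h^2)\geq 0$; and for $N=6$ the sum reduces to $30\,[A(x^4,h^2)+A(x^2,h^4)]$, both summands of which are nonnegative by the displayed convexity inequality. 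In each case $Q\geq 0$, which finishes the argument.

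I expect the only real subtlety — and the reason the result stops at $N=6$ — to be this bookkeeping: the second-order (Hessian) convexity condition pins down exactly the terms $A(x^{N-2},h^2)$, and $N=6$ is the largest degree for which every intermediate coefficient of $Q$ is of this extreme-split form or its mirror image. At $N=8$ a genuinely balanced term $A(x^4,h^4)$ enters the sum, about which convexity alone gives no sign information, so both the argument and the constant-$1$ conclusion break down there.
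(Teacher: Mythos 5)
Your proposal is correct and follows essentially the same route as the paper: expand $\Vert x+h\Vert^N+\Vert x-h\Vert^N$ via the symmetric $N$-linear form, observe the odd terms cancel and the extreme terms give $2\Vert x\Vert^N+2\Vert h\Vert^N$, and use convexity of $P$ (nonnegativity of the second derivative) to conclude $A(z^{N-2},v^2)\geq 0$, which covers every remaining cross term exactly when $N\leq 6$. Your explicit general formula for the remainder and the observation about the balanced term $A(x^4,h^4)$ at $N=8$ are a slightly more systematic presentation of the same argument.
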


\begin{proof} In the case of $N=2$ the result is obvious since the norm is Hibertian, and consequently has modulus of uniform
convexity of power type $2$ with constant $1$. Moreover for every $x,h\in X$ we have:
$$
\Vert x +h \Vert^2 + \Vert x-h \Vert^2=  2\Vert x \Vert^2 + 2\Vert h \Vert^2.
$$
Let now consider the case $p=4$. Assume that $\Vert x \Vert^4 =P(x)=A(x,x,x,x)$ where  $A$ is the $4$-linear symmetric form
associated to  $P$. Note that since $P$ is convex, its second derivative satisfies
$D^2 P(x)(h)\geq 0$, and consequently, $A(x,x,h,h) \geq 0$, for every $x,h \in X$. Then,
$$
\Vert x +h \Vert^4 + \Vert x-h \Vert^4 = A(x+h,x+h,x+h,x+h) +
A(x-h,x-h,x-h,x-h)=
$$
$$
= 2\Vert x \Vert^4 + 2\Vert h \Vert^4+ 12A(x,x,h,h) \geq 2\Vert x
\Vert^4 + 2\Vert h \Vert^4.
$$
Thus  the norm has modulus of uniform convexity of power type $4$ with $4$-convexity constant
equals to $1$. The proof for $p=6$ is analogous; note that if
$\Vert x \Vert^6=P(x)= A(x,x,x,x,x,x)$ then by the convexity of
the polynomial $P$ we have $A(x,x,h,h,h,h)= A(h,h,h,h,x,x) \geq
0$ and the conclusion follows in the same way.
\end{proof}

We finish the paper by giving power type estimates for  the modulus of  asymptotic uniform
convexity and smoothness of a polynomial norm of degree $N$,  when every polynomial of degree $<N$ is {\it wb}-continuous.
We will prove the existence of universal constants for all polynomial norms of a fixed degree $N$, independent on the Banach space.

\begin{prop}  Let $N$ be an even integer. Then, there exist constants $c_{N}, C_{N} >0$ such that every Banach space $(X, \Vert \cdot \Vert)$ endowed with a polynomial norm of degree $N$, and such that every polynomial of degree $<N$ is {\it wb}-continuous, has moduli of  asymptotic uniform convexity and  smoothness of power type $N$, verifying that for every $0<t \leq 1$,
$$
\overline{\rho}(t) \leq c_{N}t^{N},  \qquad \qquad \overline{\delta}(t) \geq C_{N}t^{N}.
$$

\end{prop}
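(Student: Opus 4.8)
The plan is to feed the binomial expansion of the polynomial norm into Lemma \ref{wb}, annihilating the lower-order terms on a finite-codimensional subspace so that everything reduces to elementary one-variable estimates with constants depending only on $N$. Fix $x$ with $\Vert x\Vert=1$ and let $A$ be the symmetric $N$-linear form associated with $P$, so that $P(x)=A(x,\stackrel{N}{\dots},x)=\Vert x\Vert^N$. For $h\in X$ the binomial identity gives
$$\Vert x+h\Vert^N=P(x+h)=1+\sum_{j=1}^{N-1}\binom{N}{j}Q_j(h)+\Vert h\Vert^N,$$
where $Q_j(h)=A(x,\stackrel{N-j}{\dots},x,h,\stackrel{j}{\dots},h)$ is, for fixed $x$, a $j$-homogeneous polynomial in $h$ with $Q_j(0)=0$. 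Since $j<N$, each $Q_j$ is {\it wb}-continuous, so by Lemma \ref{wb} and a finite intersection, for any prescribed $\eta>0$ there is a finite-codimensional subspace $H$ (depending on $x$ and $\eta$) with $\Vert Q_j\Vert_H<\eta$ for all $j$, whence $\vert Q_j(h)\vert\le\eta\Vert h\Vert^j$ for every $h\in H$. Crucially, $H$ may depend on $x$, so no global bound on $\Vert A\Vert$ is ever required, and this is exactly what forces the final constants to depend only on $N$.

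For the smoothness estimate I would take $\eta=t^{N-1}$. Then for $h\in H$ with $\Vert h\Vert\le t\le 1$, using $t^{N-1+j}\le t^N$ for $j\ge 1$,
$$\Vert x+h\Vert^N\le 1+t^{N-1}\sum_{j=1}^{N-1}\binom{N}{j}t^j+\Vert h\Vert^N\le 1+(2^N-1)t^N,$$
and since $(1+s)^{1/N}\le 1+s/N$ for $s\ge 0$, this yields $\Vert x+h\Vert-1\le\frac{2^N-1}{N}t^N$. Taking the infimum over $H$ and then the supremum over the unit sphere gives $\overline{\rho}(t)\le c_Nt^N$ with $c_N=(2^N-1)/N$.

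For the convexity estimate I would instead take $\eta=\beta t^{N-1}$ with $\beta=\frac{1}{2(2^N-2)}$. For $h\in H$ with $s=\Vert h\Vert\ge t$ one has $\Vert x+h\Vert^N\ge 1+g(s)$, where $g(s)=s^N-\eta\sum_{j=1}^{N-1}\binom{N}{j}s^j$. Substituting $s=ut$ with $u\ge 1$, and using $t^{j-1}\le 1$ and $u^j\le u^{N-1}$,
$$g(ut)\ge t^N u^{N-1}\bigl(u-\beta(2^N-2)\bigr)\ge\tfrac12 t^N,$$
the last step holding because $\beta(2^N-2)=\tfrac12$, while $u^{N-1}\ge 1$ and $u-\tfrac12\ge\tfrac12$. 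Hence $\Vert x+h\Vert^N\ge 1+\tfrac12 t^N$ for all $h\in H$ with $\Vert h\Vert\ge t$, and the concavity bound $(1+s)^{1/N}\ge 1+(2^{1/N}-1)s$ on $[0,1]$ gives $\Vert x+h\Vert-1\ge\frac{2^{1/N}-1}{2}t^N$. Taking the supremum over $H$ and the infimum over the unit sphere yields $\overline{\delta}(t)\ge C_Nt^N$ with $C_N=(2^{1/N}-1)/2$.

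The routine ingredients are the two scalar inequalities for $(1+s)^{1/N}$ and the bookkeeping with binomial coefficients. The one genuine difficulty, which I regard as the main obstacle, lies in the convexity lower bound: one must control $g(s)$ for \emph{all} $s\ge t$ simultaneously, not merely for $s$ close to $t$, since at intermediate scales the cross terms $\eta s^j$ could otherwise swamp $s^N$. Scaling $\eta$ like $t^{N-1}$ and passing to the variable $u=s/t$ is precisely what tames this and exhibits a constant depending only on $N$.
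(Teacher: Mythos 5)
Your proof is correct and follows essentially the same route as the paper's: expand $\Vert x+h\Vert^N$ by the binomial identity for the symmetric $N$-linear form, use Lemma \ref{wb} (with threshold of order $t^{N-1}$) to annihilate the lower-order terms on a finite-codimensional subspace depending on $x$ and $t$, and then take $N$-th roots to get constants depending only on $N$. The only differences are technical: you treat the constraint $\Vert h\Vert\geq t$ directly via the substitution $u=s/t$ and use explicit concavity bounds for $(1+s)^{1/N}$, whereas the paper normalizes to $\Vert h\Vert=1$, invokes Lemma 2.1 of \cite{GJT} to rewrite the moduli in that form, and applies the mean value theorem on $[1,2]$; your version is slightly more self-contained but the idea is identical.
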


\begin{proof} Let $N$ be an even integer and let $(X, \Vert \cdot \Vert)$ be a Banach space such that every polynomial of degree $<N$ is {\it wb}-continuous, and the norm is given by $P(x)=\Vert x \Vert^N$, where $P$ is an $N$-homogeneous polynomial on $X$. Let $x\in X$ with $\Vert x \Vert =1$ and $0<t\leq 1$ be fixed. Now consider $A$  the continuous,  $N$-linear symmetric form associated to $P$. Thus if $h\in X$ with $\Vert h
\Vert=1$, using the binomial formula we have that
$$
\Vert x+th\Vert^{N} =P(x+th) = A(x+th,\dots,x+th)= 1+t^N
+
\sum_{i=1}^{N-1} t^i P_{i, x}(h)
$$
where $P_{i,x}$ is an $i$-homogeneous polynomial for $i=1,\dots, N-1$, depending on $x$.

By Lemma \ref{wb} there is a finite codimensional subspace $H_{(t,x)}$ of $X$ such that:
$$
\max_{1\leq i \leq N-1} \Vert P_i \Vert_{H_{t,x}}
<\frac{t^{N-1}}{2N}.
$$
Then,
$$
\left| \sum_{i=1}^{N-1} t^i P_{i}(h) \right| \leq
tN \max_{1\leq i \leq N-1} \Vert P_i \Vert_{H} <\frac{t^N}{2}.
$$
Therefore, for every $h\in H_{(t,x)}$ with $\Vert h \Vert =1$, we have that
$$
\Vert x+th \Vert^N -1  \geq t^N- \left| \sum_{i=1}^{N-1} t^i P_{i}(h) \right|
\geq t^N-\frac{t^N}{2}=\frac{1}{2}t^N.
$$
and
$$
\Vert x+th \Vert^N -1\leq t^N+ \left| \sum_{i=1}^{N-1}\vert t^iP_i(h)\right|
\leq \frac{3}{2}t^N.
$$
Note in particular that $1\leq \Vert x+th \Vert \leq 2$ in this case. Then as an easy consequence of the mean value theorem applied to the
function $\lambda(u)=u^{1/N}$ on the interval $I=[1,2]$,  there
exist constants only depending on $N$, say  $\alpha_N , \beta_N>0$ such that if $h\in H_{(t,x)}$ and $\Vert h
\Vert=1$ we have
$$
\frac{1}{2}\alpha_N t^N \leq \alpha_N(\Vert x+th\Vert^N-1)  \leq \Vert x +
th\Vert -1 \leq \beta_N (\Vert x+th\Vert^N-1) \leq \frac{3}{2} \beta_{N} t^N.
$$
Now by using Lemma 2.1 in \cite{GJT} we have that, for every  $x\in X$ with $\Vert x \Vert =1$ and every $0<t \leq 1$,
$$
\overline{\delta} (t;x)= \sup_{dim(X/H)<\infty} \qquad \inf_{h\in H,
\,\, \Vert h \Vert = 1} \qquad \Vert x+ th \Vert -1 \geq c_{N}t^N,
$$
and
$$
\overline{\rho}(t;x)=\inf_{dim(X/H)<\infty} \qquad \sup_{h\in H,
\,\, \Vert h \Vert = 1}  \qquad\Vert x+th \Vert -1 \leq C_{N}t^{N},
$$
for the constants  $c_{N}= \frac{1}{2} \alpha_N$ and $C_{N}= \frac{3}{2}\beta_N$ depending only on $N$.
Consequently, for every $0<t \leq 1$
$$
\overline{\rho}(t) \leq c_{N}t^{N} \qquad \qquad \overline{\delta}(t) \geq C_{N}t^{N}.
$$

\end{proof}

\begin{remark} Note that in the above proposition the
hypothesis of {\it wb}-continuity of polynomials of degree  $<N$ cannot be removed, since for instance the
space $\ell_2\bigoplus \ell_4$ admits a polynomial norm of degree
$4$ and nevertheless it has modulus of asymptotic uniform
smoothness of exact power type $2$ and modulus of asymptotic
uniform convexity of exact power type $4$.
\end{remark}

\end{document}